\def\id{\operatorname{id}}
\def\id{\operatorname{id}}
\def\kms{\operatorname{KMS}}
\def\C{\mathbb{C}}
\def\R{\mathbb{R}}
\def\N{\mathbb{N}}
\def\Z{\mathbb{Z}}
\def\T{\mathbb{T}}
\def\Q{\mathbb{Q}}
 \newcommand{\IF}[0]{\mathbb{F}}
\newcommand{\CA}[0]{\mathcal{A}} 
 \renewcommand{\CD}[0]{\mathcal{D}}
 \newcommand{\CF}[0]{\mathcal{F}}
 \newcommand{\CJ}[0]{\mathcal{J}}
\newcommand{\CO}[0]{\mathcal{O}} \newcommand{\CP}[0]{\mathcal{P}}
\newcommand{\CQ}[0]{\mathcal{Q}} 
 \newcommand{\CT}[0]{\mathcal{T}}
\def\gxp{G \rtimes_\theta P}
\def\gpt{G,P,\theta}
\newtheorem{thm}{Theorem}[section]
\newtheorem{corollary}[thm]{Corollary}
\newtheorem{lemma}[thm]{Lemma}
\newtheorem{proposition}[thm]{Proposition}
\theoremstyle{definition}
\newtheorem{definition}[thm]{Definition}
\theoremstyle{remark}
\newtheorem{remark}[thm]{Remark}
\newtheorem{question}[thm]{Question}
\newtheorem{example}[thm]{Example}
\newtheorem{examples}[thm]{Examples}
\numberwithin{equation}{section}
\begin{document}

\title{A boundary quotient diagram for right LCM semigroups}

\author{Nicolai Stammeier}
\address{Department of Mathematics \\ University of Oslo \\ P.O.~Box 1053 Blindern \\ NO-0316 Oslo, Norway}
\email{nicolsta@math.uio.no}
\thanks{The author was supported by ERC through AdG 267079 and by RCN through FRIPRO 240362.}
\subjclass[2010]{46L05 (Primary) 20M30, 46L30 (Secondary)}

\begin{abstract}
We propose a boundary quotient diagram for right LCM semigroups with property (AR) that generalizes the boundary quotient diagram for $\N \rtimes \N^\times$. Our approach focuses on two important subsemigroups: the core subsemigroup and the semigroup of core irreducible elements. The diagram is then employed to unify several case studies on KMS-states, and we end with a discussion on $K$-theoretical aspects of the diagram motivated by recent findings for integral dynamics.
\end{abstract}

\maketitle

\section{Introduction}\label{sec:intro}
\noindent A countable discrete semigroup $S$ is called \emph{right LCM} if it is left cancellative and the intersection of two principal right ideals in $S$ is either empty or another principal right ideal. The terminology alludes to the existence of right least common multiples given the existence of any common right multiple. It is known that a left cancellative semigroup $S$ is right LCM if and only if Li's family of constructible right ideals $\CJ(S)$ is given by $\emptyset$ and the principal right ideals in $S$, see \cites{Li1,BLS1}. We shall assume that the semigroup $S$ is not only right LCM and unital, but also has property (AR), which is explained in Section~\ref{sec:construction of the BQD}. But we remark here that no example of a right LCM semigroup without this property is known so far, see \cite{BS1}.

A classical example of a right LCM semigroup $S$ is $\N \rtimes \N^\times$. Within the operator-algebraic context, this example is treated in great detail in the celebrated work \cite{LR2}, where Laca and Raeburn studied the Toeplitz algebra for the quasi-lattice ordered pair $(\Q \rtimes \Q_+^\times, \N \rtimes \N^\times)$, in particular with regards to its KMS-state structure for a natural dynamics. They also considered the boundary quotient $\CQ(\N \rtimes \N^\times)$ of $\CT(\N \rtimes \N^\times)$ in the sense of Crisp and Laca \cite{CrispLaca}. Using a suitable presentation for $\CT(\N \rtimes \N^\times)$ by generators and relations, they show that $\CQ(\N \rtimes \N^\times)$ is obtained from $\CT(\N \rtimes \N^\times)$ by imposing two extra relations: 
\begin{enumerate}
\item[(a)] The isometry corresponding to $(1,1) \in \N \rtimes \N^\times$ is a unitary.
\item[(m)] For every prime $p \in \N^\times$, the isometries for $\{(m,p) \mid 0 \leq m \leq p-1\}$ form a Cuntz family, that is, their range projections sum up to one.
\end{enumerate}
As a straightforward consequence, $\CQ(\N \rtimes \N^\times)$ coincides with Cuntz's $\CQ_\N$ from \cite{CuntzQ}.


The results of \cite{LR2} were extended in \cite{BaHLR} by introducing and analysing two complementary intermediate quotients between $\CT(\N \rtimes \N^\times)$ and $\CQ(\N \rtimes \N^\times)$: the additive boundary quotient $\CT_{\text{add}}(\N \rtimes \N^\times)$, obtained by imposing (a), and the multiplicative boundary quotient $\CT_{\text{mult}}(\N \rtimes \N^\times)$, obtained by imposing (m), see \cite{BaHLR}*{Proposition~3.3}. 
Altogether, these form the \emph{boundary quotient diagram} for $\N \rtimes \N^\times$:
\begin{equation*}\label{eqref:BQD NxN^times}
\begin{gathered}
\xymatrix@=17mm{
\CT(\N \rtimes \N^\times) \ar@{->>}[r]\ar@{->>}[d] & \CT_{\text{mult}}(\N \rtimes \N^\times) \ar@{->>}[d] \\
\CT_{\text{add}}(\N \rtimes \N^\times) \ar@{->>}[r] & \CQ(\N \rtimes \N^\times)
}
\end{gathered}
\end{equation*}
This diagram was shown to exhibit interesting features with respect to KMS-states, see \cite{BaHLR}*{Section~4}. 

By now, several works on KMS-state structures on Toeplitz type algebras and their quotients have been influenced, if not very much inspired by the approach in \cite{LR2}, see for instance \cites{LRR,LRRW,CaHR}. Somewhat intriguingly, the treatment for Baumslag-Solitar monoids $BS(c,d)^+$ features a boundary quotient diagram in disguise, see \cite{CaHR}*{Corollary~5.3}. For both $\N \rtimes \N^\times$ and $BS(c,d)^+$, the choices of the intermediate quotients are natural, yet based on the particular presentation of the semigroup. In addition, each of the aforementioned accounts on KMS-state structures remains an isolated case study for a specific family of right LCM semigroups, even though the similarities with regards to results and methods of proof are apparent.

One central aim of this work is to overcome this deficiency by introducing a boundary quotient diagram  \eqref{eqref:BQD} for every right LCM semigroup with property (AR) that allows us to display the results on KMS-states from \cites{LR2,BaHLR,LRR,LRRW,CaHR} in a unified manner, see Theorem~\ref{thm:KMS on BQD}. 

A convenient framework for this is provided through Li's theory \cite{Li1} of full semigroup $C^*$-algebras $C^*(S)$ and the notion of a boundary quotient $\CQ(S)$ for right LCM semigroups $S$ from \cite{BRRW}, as these constructions generalize the corresponding ones for quasi-lattice ordered groups. Thus the task reduces to identifying two natural intermediate quotients that complement each other in a suitable sense. To this end, we recall that $\CQ(S)$ is obtained from $C^*(S)$ by imposing the boundary relation $\sum_{f \in F} e_{fS}=1$ for all accurate foundation sets $F$, see Section~\ref{sec:construction of the BQD} for details. The singleton foundation sets play a special role: They are given by the elements of the \emph{core subsemigroup} $S_c = \{s \in S \mid sS \cap tS \neq 0 \text{ for all } t \in S\}$, and the boundary relation turns the corresponding generating isometries $v_s \in C^*(S)$ into unitaries. This will serve as our defining relation for the \emph{core boundary quotient} $\CQ_c(S)$, which generalizes $\CT_{\text{add}}(\N \rtimes \N^\times)$. 

The naive approach of defining the analogue of $\CT_{\text{mult}}(\N \rtimes \N^\times)$ as the quotient of $C^*(S)$ by the boundary relation for every accurate foundation set $F$ with $\lvert F \rvert \geq 2$, or equivalently $F \subset S\setminus S_c$, is bound to fail. Indeed, it is easy to see that we get nothing but $\CQ(S)$ in this case. Therefore, we propose a slightly more elaborate version: We call an element $s \in S \setminus S_c$ \emph{core irreducible} if every factorization $s=tr$ with $r \in S_c$ satisfies $r \in S^*$, where $S^*$ denotes the subgroup of invertible elements in $S$. Since $S$ is assumed left cancellative, the core irreducible elements form a subsemigroup $S_{ci}$ of $S$. We say that a foundation set $F$ is \emph{proper} if $F$ consists of core irreducible elements, and then define the \emph{proper boundary quotient} $\CQ_p(S)$ as the quotient of $C^*(S)$ by the boundary relation for all proper accurate foundation sets. Thus, the conditions
\begin{enumerate}
\item[(c)] For every $s \in S_c$, the isometry $v_s$ is a unitary.
\item[(p)] For every proper accurate foundation set $F$, the Toeplitz-Cuntz family of isometries $(v_f)_{f \in F}$ is a Cuntz family. 
\end{enumerate} 
replace (a) and (m) from $\N \rtimes\N^\times$ for a general right LCM semigroup with property (AR), and thus giving rise to the \emph{boundary quotient diagram}
\[\xymatrix@=17mm{
C^*(S) \ar@{->>}[r]\ar@{->>}[d] & \CQ_p(S) \ar@{->>}[d] \\
\CQ_c(S) \ar@{->>}[r] & \CQ(S)
}\]

We then show that, under mild assumptions, $\CQ_c(S)$ and $\CQ_p(S)$ are complementary quotients in between $C^*(S)$ and $\CQ(S)$ in the sense that (c) and (p) together yield $\CQ(S)$, see Proposition~\ref{prop:ACC gives complementary maps in the BQD}. As a continuation of considerations from \cite{BLS2}, we then describe sufficient conditions under which semigroup homomorphisms between right LCM semigroups give rise to $*$-homomorphisms between corresponding corners of the boundary quotient diagrams, see Remark~\ref{rem:BQD induced maps}. 

In order to demonstrate the utility of our approach, various examples are discussed in Section~\ref{sec:examples}. In particular, our boundary quotient diagram is shown to explain the appearance of the two intermediate quotients $C^*_A(U \bowtie A)$ and $C^*_U(U \bowtie A)$ in \cite{BRRW}*{Remark~5.4} for rather special Zappa-Sz\'{e}p products $U \bowtie A$ of right LCM semigroups, see Example~\ref{ex:Zappa-Szep products}. More importantly, our perspective indicates that the focus on the two components $U$ and $A$ is somewhat misleading: The reason why the two quotients agree with $\CQ_c(U \bowtie A)$ and $\CQ_p(U \bowtie A)$ is that the prescribed conditions $U$ and $A$ need to satisfy force $(U \bowtie A)_c = U^* \bowtie A_c$ and $(U \bowtie A)_{ci} = U_{ci} \bowtie A^*$, so that proper accurate foundation sets of $U \bowtie A$ are essentially determined by proper accurate foundation sets of $U$.

The structure of the subsemigroup of \emph{core irreducible} elements $S_{ci}$ is of independent interest. It appears to contain vital information on the semigroup $S$ itself, see Remark~\ref{rem:S=S_ciS_c and S* trivial -> ZS-product}. With regards to the associated $C^*$-algebras, we provide evidence that $S_{ci}$ plays an important role in the quest for the $K$-theory of $\CQ(S)$ and $\CQ_p(S)$. This is discussed in Section~\ref{sec:k-theory} in the context of integral dynamics \cite{BOS1} and Baumslag-Solitar monoids \cite{Spi1}. 

We suspect the boundary quotient diagram to admit an elegant and equally useful description in the language of groupoids. In particular, this might be the key to studying a boundary quotient diagram for countable discrete left cancellative semigroups and to obtaining a vast generalization of Theorem~\ref{thm:KMS on BQD}. In this direction, the work of Laca and Neshveyev \cite{LaNe} may be crucial, and its appendix indicates the existence of a common theme for KMS-state structures of this kind. Having said that, we will not address this here for the sake of an elementary and brief exposition.

The paper is organized as follows: The boundary quotient diagram for right LCM semigroups with property (AR) is constructed in Section~\ref{sec:construction of the BQD}. The particular form of the boundary quotient diagram for a variety of examples is discussed in Section~\ref{sec:examples}. As an application of the diagram, a unifying statement for the results on KMS-states from the four different case studies \cites{LR2,BaHLR,LRR,LRRW,CaHR} is established in Section~\ref{sec:KMS}. In the final Section~\ref{sec:k-theory}, these results are contrasted by $K$-theoretical considerations, where we review the torsion subalgebra for integral dynamics in order to present two candidates for an analogue of this subalgebra for other right LCM semigroups.

\emph{Acknowledgements}: The author would like to thank Nadia S.~Larsen, Nathan Brownlowe, Sel\c{c}uk Barlak, Dave Robertson, and Magnus Dahler Norling for stimulating discussions. Parts of this research were carried out during a visit to the University of Wollongong, and the author would like to express his gratitude for the great hospitality of its operator algebra group.


\section{Construction of the boundary quotient diagram}\label{sec:construction of the BQD}
In \cite{BRRW}, a boundary quotient $\CQ(S)$ was introduced for right LCM semigroups $S$ as the quotient of the full semigroup $C^*$-algebra $C^*(S)$ by the relation $\prod_{f \in F}(1-e_{fS}) = 0$ for all foundation sets $F$ for $S$. Recall that a finite subset $F$ of $S$ is called a \emph{foundation set}, if, for every $t \in S$, there is $s \in F$ such that $sS \cap tS \neq \emptyset$. For convenience, let us denote the set of all foundation sets for $S$ by $\CF(S)$. This approach to defining $\CQ(S)$ was inspired by the work of Crisp and Laca in the setting of quasi lattice-ordered groups \cite{CrispLaca}. Shortly thereafter, it was observed in \cite{BS1} that a broad class of right LCM semigroups has the \emph{accurate refinement property}, or property (AR) for short: For every $F \in \CF(S)$, there is $F' \in \CF(S)$ such that 
\begin{enumerate}[a)]
\item $F'$ is \emph{accurate} ($fS$ and $f'S$ are disjoint for $f,f' \in F', f\neq f'$), and 
\item $F'$ refines $F$ (for every $f' \in F'$ there is $f \in F$ with $f' \in fS$).
\end{enumerate}
If $S$ has property (AR), then the boundary relation for $\CQ(S)$ reduces to
\begin{equation}\label{eq:sum boundary relation}
\begin{array}{c} \sum\limits_{f \in F}e_{fS} = 1 \end{array}
\end{equation}
for every accurate foundation set $F$, the collection of which we shall denote by $\CF_a(S)$. 


A simple, but important observation from \cite{Star} is the relevance of the \emph{core subsemigroup} 
\[\begin{array}{c} S_c := \{ s \in S \mid sS \cap tS \neq \emptyset \text{ for all } t \in S\}, \end{array}\]
whose origin can again be traced back to \cite{CrispLaca}. The semigroup $S_c$ contains the group of units $S^*$, and forms a right reversible semigroup, that is, finite intersections of nonempty right ideals are nonempty. Hence $S_c$ is a right Ore semigroup provided that it has right cancellation. With regards to the boundary quotient $\CQ(S)$, we note that every generating isometry $v_s \in C^*(S)$ with $s \in S_c$ is turned into a unitary when passing to $\CQ(S)$. This motivates the definition of the first intermediate quotient between $C^*(S)$ and $\CQ(S)$.

\begin{definition}\label{def:core BQ}
The \emph{core boundary quotient} $\CQ_c(S)$ is the quotient of $C^*(S)$ by the relation $v_s^{\phantom{*}}v_s^* = 1$ for all $s \in S_c$.
\end{definition}

One may be tempted to define the second intermediate quotient as the quotient of $C^*(S)$ by \eqref{eq:sum boundary relation} restricted to (accurate) foundation sets that do not contain any element from the core $S_c$. However, this yields nothing but $\CQ(S)$ as we shall now see. The starting point are the following two basic observations whose straightforward proofs are left to the reader.

\begin{lemma}\label{lem:FS concatenation}
For $F_1,F_2 \subset S$, the set $F_1 \cdot F_2 := \{st \mid s \in F_1,t \in F_2\} \in \CF(S)$ is an accurate foundation set if and only if $F_1$ and $F_2$ are accurate foundation sets.
\end{lemma}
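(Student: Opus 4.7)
The plan is to prove the two implications separately, relying on the right LCM structure and left cancellation throughout.

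For the forward direction, assume $F_1$ and $F_2$ are accurate foundation sets. To establish the foundation property of $F_1 \cdot F_2$, given $t \in S$ I would first apply foundation of $F_1$ to find $s \in F_1$ and $a, b \in S$ with $sa = tb$, and then apply foundation of $F_2$ to the element $a$ to find $r \in F_2$ and $c, d \in S$ with $rc = ad$; the identity $src = sad = tbd$ then witnesses $srS \cap tS \neq \emptyset$. To establish accuracy, I would take distinct $sr, s'r' \in F_1 \cdot F_2$ and split into cases: if $s \neq s'$, accuracy of $F_1$ gives $sS \cap s'S = \emptyset$, so $srS \cap s'r'S$—being contained in the former intersection—is empty; if $s = s'$, then $r \neq r'$, and any relation $srx = sr'y$ yields $rx = r'y$ after left cancelling $s$, contradicting accuracy of $F_2$.

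For the reverse direction, assume $F_1 \cdot F_2 \in \CF_a(S)$. The foundation property of $F_1$ is immediate: given $t$, foundation of $F_1 \cdot F_2$ produces $sr$ with $srS \cap tS \neq \emptyset$, and then the inclusion $sS \supseteq srS$ gives the desired overlap. Accuracy of $F_2$ follows by fixing any $s \in F_1$ and noting that for distinct $r, r' \in F_2$ the elements $sr, sr'$ of $F_1 \cdot F_2$ are distinct by left cancellation, so accuracy of $F_1 \cdot F_2$ combined with left cancellation of $s$ yields $rS \cap r'S = \emptyset$. Accuracy of $F_1$ is then proved by contradiction: assuming $v = sa = s'b \in sS \cap s'S$ for distinct $s, s' \in F_1$, I would apply foundation of $F_1 \cdot F_2$ at $v$ to find $\tilde{s}\tilde{r} \in F_1 \cdot F_2$ sharing a common multiple with $v$, and then use the identity $sa = s'b$ together with the right LCM factorizations to produce two distinct elements of $F_1 \cdot F_2$ whose ideals meet, contradicting accuracy. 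Finally, foundation of $F_2$ follows from the now-established accuracy of $F_1$: given $t \in S$, pick any $s \in F_1$, apply foundation of $F_1 \cdot F_2$ to $st$ to find $s'r' \in F_1 \cdot F_2$ with $s'r'x = sty$, and note that accuracy of $F_1$ forces $s' = s$, so left cancellation delivers $r'x = ty$.

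The main obstacle is the accuracy of $F_1$ in the reverse direction, since the containment $srS \cap s'rS \subseteq sS \cap s'S$ runs the wrong way to directly infer accuracy of $F_1$ from that of $F_1 \cdot F_2$. The resolution is to invoke the foundation property of $F_1 \cdot F_2$ at a common multiple $v \in sS \cap s'S$; the factorizations $v = sa = s'b$ and $\tilde{s}\tilde{r} x = v y$ together constrain where the witnessing element $\tilde{s}\tilde{r}$ can live and allow one to locate the required collision in $F_1 \cdot F_2$. All remaining pieces reduce to routine manipulations of principal right ideals and applications of left cancellation.
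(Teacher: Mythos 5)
The paper gives no argument for this lemma (it is one of the ``two basic observations whose straightforward proofs are left to the reader''), so your proposal has to be judged on its own. Your forward direction is correct, and in the reverse direction the derivation of the foundation property of $F_1$ and of the accuracy of $F_2$ from accuracy and the foundation property of $F_1\cdot F_2$ is also fine; these parts are exactly the routine manipulations one expects.

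The genuine gap is the step you yourself single out, the accuracy of $F_1$, and it is not a gap that routine effort closes. At that stage the only tool you have is the foundation property of $F_1\cdot F_2$, which, applied at a point $v\in sS\cap s'S$, produces a \emph{single} element $\tilde{s}\tilde{r}\in F_1\cdot F_2$ meeting $vS$; to manufacture a second element of $F_1\cdot F_2$ of the form $sr_0$ or $s'r_0'$ whose ideal meets $\tilde{s}\tilde{r}S$ you would need the foundation property of $F_2$, which in your ordering is only deduced \emph{afterwards} from the accuracy of $F_1$ --- the argument is circular as sketched. Moreover, even with foundation of $F_2$ in hand, the two elements you would produce need not be distinct: $sr=s'r'$ can happen with $s\neq s'$ (for instance when $s'=s\varepsilon$ for a unit $\varepsilon\in S^*$), and then no contradiction with accuracy of $F_1\cdot F_2$ arises. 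In fact the implication you are trying to establish at this step cannot follow from your hypotheses alone: take $S=X^*\bowtie G$ as in Example~\ref{ex:self-similar actions} with $X=\{\alpha,\beta\}$ and $G=\Z/2=\{1,\epsilon\}$ acting by swapping the letters (with trivial restrictions), and put $F_1=\{(\emptyset,1),(\emptyset,\epsilon)\}=S^*$ and $F_2=\{(\alpha,1),(\beta,\epsilon)\}$. Then $\epsilon$ permutes $F_2$, so $F_1\cdot F_2=F_2$, which is an accurate foundation set, while $F_1$ consists of two distinct elements with $(\emptyset,1)S=(\emptyset,\epsilon)S=S$ and hence is not accurate. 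So any correct treatment of the ``only if'' direction must either exclude such unit phenomena (e.g.\ work with the ideals $fS$ rather than the elements $f$, or assume $S^*$ trivial) or be reformulated; note that the paper only ever invokes the ``if'' direction (as in Corollary~\ref{cor:non proper FS}), which is the part you prove correctly. As written, your accuracy-of-$F_1$ step, and with it the subsequent deduction of the foundation property of $F_2$, does not go through.
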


If $F_i = \{s\}$ for some $s \in S$, we shall simply write $s \cdot F_2$ or $F_1 \cdot s$, respectively.

\begin{lemma}\label{lem:boundary relation for products of FS}
Let $F_1,F_2 \in \CF_a(S)$. Then the boundary relation \eqref{eq:sum boundary relation} for both $F_1$ and $F_2$ is equivalent to the boundary relation \eqref{eq:sum boundary relation} for $F_1 \cdot F_2$.
\end{lemma}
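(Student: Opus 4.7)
The key algebraic identity I would exploit is that $v_{st} = v_s v_t$ in $C^*(S)$, whence
\[e_{stS} \;=\; v_{st}^{\phantom{*}} v_{st}^{*} \;=\; v_s v_t v_t^* v_s^* \;=\; v_s e_{tS} v_s^*.\]
This recasts the boundary sum for $F_1 \cdot F_2$ as
\[\sum_{s \in F_1,\, t \in F_2} e_{stS} \;=\; \sum_{s \in F_1} v_s \Bigl(\sum_{t \in F_2} e_{tS}\Bigr) v_s^{*}.\]

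The forward direction is then immediate: substituting $\sum_{t \in F_2} e_{tS} = 1$ into the inner sum collapses the right-hand side to $\sum_{s \in F_1} v_s v_s^{*} = \sum_{s \in F_1} e_{sS}$, which equals $1$ by the other hypothesis.

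For the converse, I would set $P := \sum_{t \in F_2} e_{tS}$ and note that $P$ is a projection with $P \le 1$, since accuracy of $F_2$ makes the summands pairwise orthogonal. The hypothesis then reads $\sum_{s \in F_1} v_s P v_s^{*} = 1$. Combining $v_s P v_s^{*} \le v_s v_s^{*} = e_{sS}$ with the orthogonality of the $e_{sS}$ (which comes from accuracy of $F_1$) produces the sandwich
\[1 \;=\; \sum_{s \in F_1} v_s P v_s^{*} \;\leq\; \sum_{s \in F_1} e_{sS} \;\leq\; 1.\]
The outer equality is precisely the boundary relation for $F_1$. Moreover, since the inequalities $v_s P v_s^{*} \le e_{sS}$ are between positive elements whose sums agree, term-wise equality must hold, i.e.\ $v_s P v_s^{*} = v_s v_s^{*}$ for every $s \in F_1$. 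Compressing by $v_s^{*}$ on the left and $v_s$ on the right and using $v_s^{*} v_s = 1$ recovers $P = 1$, the boundary relation for $F_2$.

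The only point that requires any care is the sandwich inequality: the accuracy of $F_i$ is exactly what is needed to make the projections $e_{fS}$ for $f \in F_i$ pairwise orthogonal, so that their sums are projections bounded by $1$ and so that term-wise comparison is meaningful. Once this is noted, the argument is formal and invokes no further structure of $S$ beyond the Toeplitz-type relations used in defining $C^*(S)$.
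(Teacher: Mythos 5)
Your proof is correct and is essentially the straightforward computation the paper has in mind (the paper explicitly leaves the proof of this lemma to the reader), conjugating the boundary sum for $F_2$ by the isometries $v_s$, $s \in F_1$, and using accuracy to compare projections. The only point worth adding is that the identification $\sum_{f \in F_1\cdot F_2} e_{fS} = \sum_{s \in F_1}\sum_{t \in F_2} e_{stS}$ implicitly uses that $(s,t) \mapsto st$ is injective on $F_1 \times F_2$, which follows from accuracy of $F_1$ (if $s \neq s'$ then $st \in sS \cap s'S = \emptyset$ is impossible) together with left cancellativity of $S$.
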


As a direct consequence of Lemma~\ref{lem:FS concatenation} and Lemma~\ref{lem:boundary relation for products of FS}, we get:

\begin{corollary}\label{cor:non proper FS}
Let $F \in \CF_a(S)$ and $s \in S_c $. Then $s \cdot F,F \cdot s \in \CF_a(S)$, and the boundary relation \eqref{eq:sum boundary relation} for $s \cdot F$ or $F \cdot s$ is equivalent to the boundary relation \eqref{eq:sum boundary relation} for $F$ and $v_s^{\phantom{*}}v_s^* = 1$. 
\end{corollary}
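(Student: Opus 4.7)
The plan is to derive this corollary as a straightforward combination of the two preceding lemmas once we identify that, for $s \in S_c$, the singleton $\{s\}$ is itself an accurate foundation set.

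First I would verify that $\{s\} \in \CF_a(S)$ whenever $s \in S_c$. By definition, $s \in S_c$ means $sS \cap tS \neq \emptyset$ for every $t \in S$, which is exactly the foundation set condition for the singleton $\{s\}$. Accuracy holds vacuously, since the disjointness requirement $fS \cap f'S = \emptyset$ is only imposed for distinct $f,f' \in F$. So $\{s\} \in \CF_a(S)$.

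With this in hand, I would apply Lemma~\ref{lem:FS concatenation} with $F_1 = \{s\}$ and $F_2 = F$ (and symmetrically with $F_1 = F$, $F_2 = \{s\}$) to conclude $s\cdot F, F\cdot s \in \CF_a(S)$. For the equivalence of boundary relations, I would apply Lemma~\ref{lem:boundary relation for products of FS} to the same two factorizations: the boundary relation \eqref{eq:sum boundary relation} for $s \cdot F$ (respectively $F \cdot s$) is equivalent to the conjunction of the boundary relations for $\{s\}$ and for $F$. Since the boundary relation for the singleton $\{s\}$ is precisely $e_{sS} = v_s^{\phantom{*}}v_s^* = 1$, this yields exactly the stated equivalence.

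There is no genuine obstacle here; the entire content of the corollary is the observation that singletons from $S_c$ are accurate foundation sets, which then plugs directly into the multiplicativity statements of Lemmas~\ref{lem:FS concatenation} and~\ref{lem:boundary relation for products of FS}. The only minor point worth flagging in the write-up is the vacuous verification of accuracy for singletons, since the reader may otherwise wonder whether accuracy imposes any nontrivial condition in that case.
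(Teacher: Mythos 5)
Your proof is correct and is exactly the route the paper intends: the corollary is stated there as a direct consequence of Lemma~\ref{lem:FS concatenation} and Lemma~\ref{lem:boundary relation for products of FS}, applied with the observation that $\{s\}$ is an accurate foundation set for $s \in S_c$ and that its boundary relation is $e_{sS} = v_s^{\phantom{*}}v_s^* = 1$. No discrepancies to report.
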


By virtue of Corollary~\ref{cor:non proper FS}, we see that imposing the boundary relation \eqref{eq:sum boundary relation} on $C^*(S)$ for all $F \in \CF_a(S)$ with $F \cap S_c = \emptyset$ still yields $\CQ(S)$. Thus we need a more sophisticated approach, to this end, recall that a non-invertible element $s$ of a monoid $S$ is said to be \emph{irreducible} if $s \notin S^*$ and any decomposition $s=tr$ in $T$ satisfies $t \in S^*$ or $r \in S^*$. 

\begin{definition}\label{def:core irred}
An element $s \in S\setminus S_c$ is called \emph{core irreducible} if $s=tr$ for $t \in S$ and $r \in S_c$ implies $r \in S^*$. The set of core irreducible elements in $S$ is denoted by $S_{ci}$.
\end{definition}

Note that the core irreducible elements are minimal representatives of the equivalence classes in $S/\sim$, where $s \sim t$ if there are $r,r' \in S_c$ such that $sr=tr'$. In addition, let us remark that $S_{ci}$ is a semigroup (without identity) as $S$ is left cancellative, and we denote its unitization by $S_{ci}^{1}$.

\begin{definition}\label{def:proper FS}
A foundation set $F$ for $S$ is called \emph{proper} if $F \subset S_{ci}$. The set of accurate proper foundation sets is denoted by $\CF^{(p)}_a(S)$.
\end{definition}

\begin{definition}\label{def:proper BQ}
The \emph{proper boundary quotient} $\CQ_p(S)$ is the quotient of $C^*(S)$ by the boundary relation \eqref{eq:sum boundary relation} for all proper accurate foundation sets $F$.
\end{definition}

We remark that Definition~\ref{def:proper BQ} does not cater for cases of type $s \cdot F$ with $s \in S_c\setminus S^*$ and $F \in \CF_a^{(p)}(S)$ from Corollary~\ref{cor:non proper FS} explicitly. The reason is that we always get $s \cdot F = F' \cdot s'$ for some $s' \in S_c \setminus S^*$ and $F' \in \CF_a^{(p)}(S)$ in all the examples that we considered, see Section~\ref{sec:examples}. This raises the question whether the definition of $\CQ_p(S)$ ought to be modified:

\begin{question}\label{que:modify Q_p}
Is there a right LCM semigroup $S$ (with property (AR)) for which there are $s \in S_c\setminus S^*$ and $F \in \CF_a^{(p)}(S)$ with $s \cdot F \subset S_{ci}$, i.e.~such that $s \cdot F \in \CF_a^{(p)}(S)$?
\end{question}

With Definition~\ref{def:core BQ} and Definition~\ref{def:proper BQ} at hands, we are ready for the main definition.

\begin{definition}\label{def:BQD}
The \emph{boundary quotient diagram} of a right LCM semigroup $S$ is given by:
\begin{equation}\label{eqref:BQD}
\begin{gathered}
\xymatrix@=17mm{
C^*(S) \ar^{\pi_p}@{->>}[r]\ar_{\pi_c}@{->>}[d] & \CQ_p(S) \ar@{->>}[d] \\
\CQ_c(S) \ar@{->>}[r] & \CQ(S)
}
\end{gathered}
\end{equation}
\end{definition}

It is a natural question whether $\CQ(S)$ can be obtained by imposing the relations for $\CQ_p(S)$ on $\CQ_c(S)$, and vice versa. The next proposition shows that this is indeed the case, given that all elements in $S$ admit a factorization into a core irreducible and a core element. This holds true whenever $S$ satisfies the \emph{ascending chain condition} with respect to $\CJ(S)$, i.e.~every ascending sequence of constructible right ideals becomes stationary. More precisely, this is true if the binary relation $s \to t :\Leftrightarrow s \in t(S_c\setminus S^*)$ is \emph{terminating} as discussed in \cite{Bri1}*{Subsection~2.5}: There is no infinite sequence $(s_n)_{n \geq 1}$ with $s_n \to s_{n+1}, s_n \neq s_{n+1}$ for all $n$. Clearly, if $\to$ is terminating, then $S=S_{ci}^{1}S_c$.

\begin{proposition}\label{prop:ACC gives complementary maps in the BQD}
Let $S$ be a right LCM semigroup with $S=S_{ci}^1S_c$. Then $\CQ(S)$ is the quotient of $\CQ_p(S)$ by the relation $\pi_p(v_s^{\phantom{*}}v_s^*) = 1$ for all $s \in S_c$. Equivalently, $\CQ(S)$ is the quotient of $\CQ_c(S)$ by the relation $\sum_{f \in F} \pi_c(e_{fS}) = 1$ for all accurate proper foundation sets $F$. In particular, this holds true if the relation $\to$ is terminating.
\end{proposition}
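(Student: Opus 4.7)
The two formulations together assert that $\CQ(S)$ coincides with the quotient of $C^*(S)$ by the combined relations (c) and (p), so I would unify them into the single claim: once both families of relations are imposed on $C^*(S)$, every $F\in\CF_a(S)$ already satisfies $\sum_{f\in F}e_{fS}=1$. One direction is automatic because $\CQ(S)$ visibly satisfies (c) and (p). For the substantive direction, fix $F\in\CF_a(S)$ and use $S=S_{ci}^1 S_c$ to factor each $f\in F$ as $f=a_f b_f$ with $a_f\in S_{ci}^1$ and $b_f\in S_c$.

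If some $f\in F$ lies in $S_c$, then $fS\cap f'S\neq\emptyset$ for every $f'\in S$, and accuracy of $F$ forces $F=\{f\}$, so the boundary relation reduces to $v_fv_f^*=1$, i.e.\ exactly (c). Otherwise every $a_f$ is in $S_{ci}$; using $v_f=v_{a_f}v_{b_f}$ in $C^*(S)$ and $v_{b_f}v_{b_f}^*=1$ from (c), one gets $e_{fS}=e_{a_fS}$ modulo (c). The target sum thus rewrites as $\sum_{f\in F}e_{a_fS}$, and the argument is finished as soon as $F':=\{a_f:f\in F\}$ is shown to be a proper accurate foundation set with $|F'|=|F|$, since (p) applied to $F'$ then yields the desired identity.

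The heart of the proof, and what I expect to be the main obstacle, is verifying accuracy of $F'$ (which simultaneously forces distinctness of the $a_f$'s). I would argue contrapositively. Suppose $a_1S\cap a_2S\neq\emptyset$ for $a_i=a_{f_i}$ with $f_1\neq f_2$. By the right LCM property this intersection equals $cS$ with $c=a_1r_1=a_2r_2$ for some $r_1,r_2\in S$. Because $b_i\in S_c$ one has $r_iS\cap b_iS\neq\emptyset$, and by right LCM this equals $r_id_iS$ for some $d_i\in S$. A direct calculation inside $cS$ (using left cancellation by $a_i$) gives $f_1S\cap f_2S=c(d_1S\cap d_2S)$. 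The key observation is that each $d_i$ actually lies in $S_c$: for any $t\in S$,
\[r_id_iS\cap r_itS=(r_iS\cap b_iS)\cap r_itS=r_itS\cap b_iS,\]
which is nonempty since $b_i\in S_c$, and cancelling $r_i$ on the left yields $d_iS\cap tS\neq\emptyset$. Hence $d_1S\cap d_2S\neq\emptyset$, so $f_1S\cap f_2S\neq\emptyset$, contradicting accuracy of $F$. Therefore the $a_f$ have pairwise disjoint principal right ideals, and $F'\in\CF_a^{(p)}(S)$ because it is evidently a foundation set ($fS\subseteq a_fS$) consisting of core irreducibles.

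Assembling the pieces gives $\sum_{f\in F}e_{fS}=\sum_{f\in F}e_{a_fS}=\sum_{a\in F'}e_{aS}=1$ in the common quotient $C^*(S)/\langle(c),(p)\rangle$, establishing both formulations of the proposition. The final clause is routine: if $\to$ is terminating, then for any $s\in S$ the chain obtained by iteratively extracting non-invertible core factors on the right must stabilize, and the terminal element lies in $S_{ci}^1$, so $s\in S_{ci}^1 S_c$. The remaining work beyond the accuracy step is bookkeeping; it is the interplay between the right LCM structure and the core property of the $b_i$, together with careful use of left cancellation, that carries the proof.
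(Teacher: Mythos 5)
Your proof is correct and follows essentially the same route as the paper: reduce both formulations to imposing (c) and (p) jointly on $C^*(S)$, dispose of the singleton case $F\cap S_c\neq\emptyset$, factor $f=a_fb_f$ with $a_f\in S_{ci}$, $b_f\in S_c$, identify $e_{fS}$ with $e_{a_fS}$ modulo (c) via $e_{a_fS}-e_{fS}=v_{a_f}(1-e_{b_fS})v_{a_f}^*$, and apply (p) to $F'=\{a_f \mid f \in F\}$. The only difference is that you verify in detail that $F'$ is accurate (through the right LCM $c$ of $a_1,a_2$ and the observation that the resulting elements $d_i$ lie in $S_c$), a step the paper dispatches with ``Noting that $F_i$ is an accurate proper foundation set''; your argument for that step is sound.
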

\begin{proof}
Let $\CQ'(S)$ be the quotient of $\CQ_p(S)$ obtained by imposing $\pi_p(v_s^{\phantom{*}}v_s^*) = 1$ for all $s \in S_c$. Then $\CQ(S)$ is a quotient of $\CQ'(S)$. Hence it suffices to show that \eqref{eq:sum boundary relation} holds for every accurate foundation set in $\CQ'(S)$. Let $\pi'_c \colon \CQ_p(S) \to \CQ'(S)$ denote the quotient map and suppose $F$ is an accurate foundation set. If $F \cap S_c \neq \emptyset$, then necessarily $F = \{s\}$ for some $s \in S_c$ due to accuracy. But in this case, there is nothing to show. So let $F \subset S\setminus S_c$. By assumption, each $f \in F$ can be written as $f=f_if_c$ with $f_i\in S_{ci}$ and $f_c \in S_c$. Noting that $F_i := \{f_i \mid f \in F\}$ is an accurate proper foundation set, and $\pi'_c(\pi_p(e_{f_iS})) = \pi'_c(\pi_p(e_{fS}))$ as $e_{f_iS}^{\phantom{*}} - e_{fS}^{\phantom{*}}= v_{f_i}^{\phantom{*}}(1-e_{f_cS}^{\phantom{*}})v_{f_i}^*$, we get
\[\begin{array}{c}
\sum\limits_{f \in F} \pi'_c(\pi_p(e_{fS})) = \sum\limits_{f_i \in F_i} \pi'_c(\pi_p(e_{f_iS})) = 1.
\end{array}\]
Thus $\CQ'(S)$ coincides with $\CQ(S)$.
\end{proof}

Given that $S=S_{ci}^1S_c$, Proposition~\ref{prop:ACC gives complementary maps in the BQD} shows that \eqref{eqref:BQD} takes the form
\begin{equation}\label{eqref:BQD with ACC}
\begin{gathered}
\xymatrix@=17mm{
C^*(S) \ar^{\pi_p}@{->>}[r]\ar_{\pi_c}@{->>}[d] & \CQ_p(S) \ar^{\pi'_c}@{->>}[d] \\
\CQ_c(S) \ar^{\pi'_p}@{->>}[r] & \CQ(S)
}
\end{gathered}
\end{equation}
where $\pi'_p$ and $\pi'_c$ are induced by $\pi_p$ and $\pi_c$, respectively. As we shall see in Section~\ref{sec:examples}, all our examples satisfy $S=S_{ci}^1S_c$. This motivates the following two questions:

\begin{question}\label{que:right LCM without S=S_ci S_c}
Is there a right LCM semigroup $S$ that does not satisfy $S=S_{ci}^1S_c$?
\end{question}

\begin{question}\label{que:non-standard BQD}
Is there a right LCM semigroup $S$ for which \eqref{eqref:BQD with ACC} does not hold?
\end{question}

\begin{remark}\label{rem:S=S_ciS_c and S* trivial -> ZS-product}
Let $S$ be right LCM with $S^*=\{1\}$. Then $S=S_{ci}^1S_c$ is precisely what is needed to display $S$ as the internal Zappa-Sz\'{e}p product $S_{ci}^1 \bowtie S_c$, see \cite{Bri1}. If $S^* \neq \{1\}$, but $S_{ci}^1$ admits a transversal $T \subset S_{ci}^1$ for the right action of $S^*$ which forms a semigroup, then $S$ is the internal Zappa-Sz\'{e}p product $T \bowtie S_c$. This is for instance the case for self-similar actions, see Example~\ref{ex:self-similar actions}.
\end{remark}

Let us now examine under which conditions a semigroup homomorphism $\phi\colon S \to T$ between two right LCM semigroups $S$ and $T$ induces maps of the quotients appearing in the boundary quotient diagram \eqref{eqref:BQD}. This constitutes a natural continuation of \cite{BLS2}*{Section~3} leads to new applications, see Section~\ref{sec:examples}. To begin with, observe that $\phi$ is necessarily unital because $T$ is assumed left cancellative and thus the only idempotent in $T$ is $1_T$. Next, recall from \cite{BLS2}*{Theorem~3.3} that $\phi$ induces a $*$-homomorphism $\varphi\colon C^*(S) \to C^*(T)$ if and only if 
\begin{equation}\label{eq:funct C*(S)}
\begin{array}{c} 
\phi(s_1)T \cap \phi(s_2)T = \phi(s_1S \cap s_2S)T \quad \text{for all } s_1,s_2 \in S. 
\end{array}
\end{equation}

\begin{remark}\label{rem:BQD induced maps}
Let $S$ and $T$ be right LCM semigroups. A semigroup homomorphism $\phi\colon S \to T$ satisfying \eqref{eq:funct C*(S)} induces a $*$-homomorphism
\begin{enumerate}[a)]
\item $\varphi_c\colon\CQ_c(S) \to \CQ_c(T)$ if $\phi(S_c)$ is a subsemigroup of $T_c$,
\item $\varphi_p\colon\CQ_p(S) \to \CQ_p(T)$ if $\phi$ maps $\CF^{(p)}_a(S)$ to $\CF^{(p)}_a(T)$, and
\item $\varphi_q\colon\CQ(S) \to \CQ(T)$ if $\phi$ maps $\CF(S)$ to $\CF(T)$.
\end{enumerate}
If \eqref{eqref:BQD} is given by \eqref{eqref:BQD with ACC}, e.g.~if $S=S_{ci}^1S_c$, then condition $\phi(\CF(S)) \subset\CF(T)$ from c) is equivalent to
\begin{enumerate}
\item[c')] $\phi(S_c) \subset T_c$ and $\phi$ maps $\CF^{(p)}_a(S)$ to $\CF^{(p)}_a(T)$.
\end{enumerate}  
\end{remark}

\begin{question}\label{que:necessity of conditions for induced maps}
Are the conditions presented in Remark~\ref{rem:BQD induced maps}~a)--c') necessary?
\end{question}


\section{Examples}\label{sec:examples}
\noindent Within this section we discuss the boundary quotient diagram for a selection of right LCM semigroups encompassing integral dynamics~\ref{ex:NxN^x and subdynamics}, Baumslag-Solitar monoids~\ref{ex:BS(c,d)^+ with cd > 1}, algebraic dynamical systems~\ref{ex:ADS}, Zappa-Sz\'{e}p products~\ref{ex:Zappa-Szep products}, and self-similar actions~\ref{ex:self-similar actions}. In addition, we mention right Ore semigroups in Example~\ref{ex:right Ore}, and indicate obstructions to induced maps for the core boundary quotient for inclusions of right LCM subsemigroups in Example~\ref{ex:induced maps}.

\begin{example}\label{ex:NxN^x and subdynamics}
Let $P \subset \N^\times$ be a monoid generated by a family $\CP$ of relatively prime numbers and consider $S:= \N \rtimes P \subset \N \rtimes \N^\times$. Then $S$ is right LCM, $S^*$ is trivial, and $S_c = \N \times \{1\}$. An element $(n,p) \in S$ is core irreducible if and only if $0 \leq n \leq p-1$, and it is irreducible if, in addition, $p$ is irreducible in $P$. We note that $S^*$ is trivial and $S=S_{ci}^1S_c$ so that $S=S_{ci}^1 \bowtie S_c$, a description that appeared already in \cite{BRRW}*{Subsection~3.2}.

As $P$ is directed, a finite set $F \subset S$ is a foundation set if and only if it is refined by the \emph{elementary foundation set} $F' := \{ (n,p_F) \mid 0 \leq n \leq p_F-1\}$, where $p_F$ is the least common multiple of $\{p \mid (n,p) \in F \text{ for some } n \in \N\}$. A proof of this observation can be obtained along the lines of \cite{BS1}*{Lemma~3.2 and Lemma~3.3}. Note that elementary foundation sets are accurate. As a consequence, it suffices to impose \eqref{eq:sum boundary relation} for proper elementary foundation sets in order to form $\CQ_p(S)$ (and $\CQ(S)$). 

Let $p=p_1\cdots p_n$ be a factorization of $p \in P$ with $p_i \in \CP$ for all $i$. If $F_i$ denotes the elementary foundation set for $p_i$, then each $F_i$ is a proper accurate foundation set and the elementary foundation set for $p$ is given by $F_1\cdots F_n$. By Lemma~\ref{lem:boundary relation for products of FS} and Proposition~\ref{prop:ACC gives complementary maps in the BQD}, we get that:
\begin{enumerate}[(a)]
\item $\CQ_c(S)$ is the quotient by $v_{(1,1)}^{\phantom{*}}v_{(1,1)}^*=1$.
\item $\CQ_p(S)$ is the quotient by $\sum_{0 \leq k \leq p-1} v_{(k,p)}^{\phantom{*}}v_{(k,p)}^* = 1$ for all $p \in \CP$.
\item $\CQ(S)$ is the quotient by $v_{(1,1)}^{\phantom{*}}v_{(1,1)}^*=1$ and $\sum_{0 \leq k \leq p-1} v_{(k,p)}^{\phantom{*}}v_{(k,p)}^* = 1$ for all $p \in \CP$.
\end{enumerate}
In particular, Defintion~\ref{def:BQD} recovers the boundary quotient of \cite{BaHLR} in the case where $\CP$ is the set of all primes, see \cite{BaHLR}*{Proposition~3.3}.
\end{example}

\begin{example}\label{ex:BS(c,d)^+ with cd > 1}
Consider the Baumslag-Solitar monoid $S=BS(c,d)^+ := \langle a,b \mid ab^c=b^da \rangle$ for $c,d \in \N^\times$ with $cd > 1$. According to \cite{Spi1}*{Theorem~2.11}, $S$ is quasi-lattice ordered, hence right LCM. By \cite{Spi1}*{Proposition~2.3}, every $s \in S$ admits a unique normal form $s=w_1w_2\cdots w_mb^i$ with $w_k \in F_d := \{ b^\ell a \mid 0 \leq \ell \leq d-1\}$ and $i \in \N$. In particular, $s \mapsto m$ gives a homomorphism $\ell\colon S \to \N$ and we call $\ell(s)$ the \emph{length} of $s$. 

Next we observe that for $s=v_1v_2\cdots v_mb^i, t=w_1w_2\cdots w_nb^j \in S$ with $m \leq n$, the intersection of $sS$ and $tS$ is non-empty if only if $v_k = w_k$ for all $1 \leq k \leq m$, in which case $sS \cap tS = tb^\ell S$ for a suitable $\ell \in \N$. Thus $S_c = \langle b \rangle \cong \N$ and $s=w_1w_2\cdots w_mb^i$ is core irreducible if and only if $i=0$. The element $s$ is irreducible if and only if $m=1$. Thus we see that $S_{ci}^1 = \langle F_d \rangle \cong \IF_d^+$, the free monoid in $d$ generators. In particular, every accurate proper foundation set $F$ for $S$ is of the form $F= (F_d)^k$ for some $k \geq 1$, i.e.~all words in $F_d$ of length $k$. As for Example~\ref{ex:NxN^x and subdynamics}, $S^*$ is trivial, $S=S_{ci}^1S_c$, and thus $S=S_{ci}^1 \bowtie S_c$. Thus by Proposition~\ref{prop:ACC gives complementary maps in the BQD}, the boundary quotient diagram is characterized as follows:
\begin{enumerate}[(a)]
\item $\CQ_c(S)$ is the quotient by $v_b^{\phantom{*}}v_b^*=1$.
\item $\CQ_p(S)$ is the quotient by $\sum_{0 \leq k \leq d-1} v_{b^ka}^{\phantom{*}}v_{b^ka}^* = 1$.
\item $\CQ(S)$ is the quotient by $v_b^{\phantom{*}}v_b^*=1$ and $\sum_{0 \leq k \leq d-1} v_{b^ka}^{\phantom{*}}v_{b^ka}^* = 1$.
\end{enumerate}
Implicitly, $\CQ_c(S)$ and $\CQ_p(S)$ have already appeared in \cite{CaHR}*{Corollary~5.3(a) and (c)}. 
\end{example}

\begin{example}\label{ex:ADS}
For an algebraic dynamical system $(\gpt)$, that is, a right LCM semigroup $P$ acting on a discrete group $G$ by injective group endomorphisms $\theta_p$ so that $pP \cap qP = rP$ forces $\theta_p(G) \cap \theta_q(G) = \theta_r(G)$, we consider the right LCM semigroup $S= \gxp$, see \cite{BLS2} for details. Let $N_p := [G:\theta_p(G)]$ for $p \in P$. We will assume that $N_p=1$ implies $p \in P^*$, and that $P$ is directed with respect to $p \geq q :\Leftrightarrow p \in qP$. Then $S_c = S^* = \gxp^*$ since for every $(g,p) \in S$ with $p \notin P^*$, we have $N_p \geq 2$, so there is $h \in G$ with $h^{-1}g \notin \theta_p(G)$ and hence $(g,p)S \cap (h,p)S = \emptyset$. Therefore, $C^*(S) = \CQ_c(S)$ and every element in $S\setminus S^*$ is core irreducible.

Let $P^{fin} := \{ p \in P \mid N_p < \infty\}$ denote the subsemigroup of \emph{finite} elements in $P$. By \cite{BS1}*{Proposition~3.9}, every foundation set for $S$ can be refined by an elementary foundation set in the sense of \cite{BS1}*{Definition~3.7} provided that every foundation set $F$ for $P$ with $F \subset P^{fin}$ admits an accurate refinement $F'$ that also satisfies $F' \subset P^{fin}$. In particular, $S$ has property (AR) in this case. Let us assume that this holds true. Then a proper elementary foundation set is of the form $F_p := \{(g_1,p),(g_2,p),\ldots,(g_{N_p},p)\}$, where $2 \leq N_p < \infty$ and $\{g_1,\ldots,g_{N_p}\}$ forms a transversal for $G/\theta_p(G)$. If $p = p_1\cdots p_n$ is a factorization into irrecudible elements $p_i \in P$, then $F_{p_1}\cdots F_{p_n}$ forms an elementary foundation set for $p$, and each $F_{p_i}$ is a proper elementary foundation set. Thus, if every $p \in P$ admits a factorization into a product of finitely many irreducible elements, then it suffices to consider elementary foundation sets of irreducibles in $P$, thanks to Lemma~\ref{lem:boundary relation for products of FS}, and therefore $\CQ_p(S) \cong \CQ(S)$ is the quotient of $C^*(S)$ by $\sum_{\overline{g} \in G/\theta_p(G)} v_{(g,p)}^{\phantom{*}}v_{(g,p)}^* = 1$ for all irreducible $p \in P$.
\end{example}

\begin{example}\label{ex:Zappa-Szep products}
Let $S= U \bowtie A$ be a Zappa-Sz\'{e}p product with $U$ and $A$ right LCM semigroups, $\CJ(A)$ totally ordered, and $U \to U, u \mapsto a\cdot u$ bijective for all $a \in A$. According to \cite{BRRW}*{Lemma~3.3}, $S$ is a right LCM semigroup. Then \cite{BRRW}*{Remark~3.4} gives $S^* = U^* \bowtie A^*$ and $(u,a)S \cap (v,b)S \neq \emptyset \Longleftrightarrow uU \cap vU \neq \emptyset$, from which we deduce:
\begin{enumerate}[(a)]
\item The natural inclusions $\phi_U\colon U \to S$ and $\phi_A\colon A \to S$ satisfy \eqref{eq:funct C*(S)}.
\item $S_c = U_c \times A$, and hence $\phi_U(U_c),\phi_A(A) \subset S_c$.
\item A finite set $F \subset S$ is an accurate proper foundation set if and only if $\{u \mid (u,a) \in F \text{ for some } a \in A\}$ is an accurate proper foundation set for $U$. In particular, $\phi$ maps $\CF^{(p)}_a(U)$ to $\CF^{(p)}_a(S)$. We have $A=A_c$ because $\CJ(A)$ is totally ordered, so $A$ does not have any proper foundation sets.
\end{enumerate}
Hence we get a commutative diagram
\begin{equation}\label{eqref:BQD for Zappa-Szep}
\begin{gathered}
\xymatrix@C=5mm@R=5mm{
 C^*(U) \ar@{->>}[dr]\ar@{->>}[dd] \ar[rrrr] &&&& C^*(S) \ar@{->>}[dr]\ar@{->>}[dd] &&&& C^*(A) \ar@{=}[dr] \ar@{->>}[dd] \ar[llll] \\
& \CQ_p(U) \ar@{->>}[dd]\ar[rrrr] &&&& \CQ_p(S) \ar@{->>}[dd] &&&& C^*(A) \ar@{->>}[dd] \ar[llll] \\
\CQ_c(U) \ar@{->>}[dr]\ar[rrrr] &&&& \CQ_c(S) \ar@{->>}[dr] &&&& \CQ(A) \ar@{=}[dr] \ar[llll] \\
& \CQ(U)\ar[rrrr] &&&& \CQ(S) &&&& \CQ(A) \ar[llll]
}
\end{gathered}
\end{equation}
Moreover, we also have $S_{ci} = U_{ci} \bowtie A^*$ and $S=S_{ci}^1S_c$ if and only if $U=U_{ci}^1U_c$ because $A^*U_c = U_cA^*$. In particular, we recover \cite{BRRW}*{Theorem~5.2} and get a conceptual approach to the intermediate quotients $C^*_A(U \bowtie A)$ and $C^*_U(U \bowtie A)$ from \cite{BRRW}*{Remark~5.4}. Note that the quotient $\CQ_c(U \bowtie A)$ is likely to be different from $C^*_A(U \bowtie A)$ as soon as $U_c \neq U^*$. More importantly, our approach provides candidates for intermediate quotients in the case where $A_c$ is a proper subsemigroup of $A$, i.e.~outside the realm of \cite{BRRW}*{Lemma~3.3}.
\end{example}

\begin{example}\label{ex:self-similar actions}
Let $(G,X)$ be a self-similar action and denote by $X^*$ the free monoid in the alphabet $X$. Then $S= X^* \bowtie G$ is a right LCM semigroup which fits into the setup of Example~\ref{ex:Zappa-Szep products} according to \cite{BRRW}*{Theorem~3.8}, which is in fact a result due to Lawson, see \cite{Law1}*{Proposition~3.5 and 3.6}. As $S_c=S^*=G, C^*(G) = \CQ(G), \CQ_c(X^*)=C^*(X^*)$, and $\CQ_p(X^*) = \CQ(X^*) \cong \CO_{\lvert X \rvert}$, the diagram \eqref{eqref:BQD for Zappa-Szep} simplifies to:
\begin{equation}\label{eqref:BQD for SSA}
\begin{gathered}
\xymatrix@C=5mm@R=5mm{
C^*(X^*) \ar@{->>}[dr]\ar[rrrr] &&&& C^*(S) \ar@{->>}[dr] &&&& C^*(G) \ar@{=}[dr] \ar[llll] \\
& \CO_{\lvert X \rvert}\ar[rrrr] &&&& \CQ(S) &&&& C^*(G) \ar[llll]
}
\end{gathered}
\end{equation}
\end{example}

\begin{example}\label{ex:right Ore}
Suppose $S$ is a right LCM semigroup that satisfies the right Ore condition, that is, there is an embedding of $S$ into a group $G$ such that $G=SS^{-1}$. Thanks to well-known results of Ore and Dubreil, the right Ore condition is equivalent to cancellation and left reversibility ($sS \cap tS \neq \emptyset$ for all $s,t \in S$). Under this assumption, we get $S_c = S$, and thus $C^*(S) = \CQ_p(S)$ as well as $\CQ_c(S) = \CQ(S) \cong C^*(G)$.
\end{example}

\begin{examples}\label{ex:induced maps}
Choose a right LCM subsemigroup $S$ of a right LCM semigroup $T$ and let $\phi\colon S \to T$ be the natural inclusion. If the equation \eqref{eq:funct C*(S)} is satisfied by $\phi$, then $S \cap T_c$ is a subsemigroup of $S_c$. If $S \cap T_c$ coincides with $S_c$, then $\phi$ induces a map $\varphi_c\colon \CQ_c(S) \to \CQ_c(T)$, see Remark~\ref{rem:BQD induced maps}. Note that $S \cap T_c$ can be a proper subsemigroup of $S_c$, e.g.~if $S$ is abelian but not contained in $T_c$. For instance, take $T$ to be the free monoid in two generators $a$ and $b$, and let $S$ the free abelian submonoid generated by $a$.
\end{examples}

\section{Towards a unified treatment for KMS states}\label{sec:KMS}
In this section, we use the boundary quotient diagram \eqref{eqref:BQD} to recast the essential results concerning KMS-states 
\begin{enumerate}[(a)]
\item for $\N \rtimes \N^\times$ from \cites{LR2,BaHLR},
\item for $\Z^d \rtimes_A \N$ with $A \in M_d(\Z), \lvert\det A\rvert > 1$ from \cite{LRR},
\item for $X^*\bowtie G$, where $(G,X)$ is a self-similar action with $\lvert X\rvert > 1$, from \cite{LRRW}, and
\item for Baumslag-Solitar monoids $BS(c,d)^+$ with $c,d \in \N^\times, d > 1$ from \cite{CaHR}.
\end{enumerate}

Note that we appeal to the dual picture of $\T^d \rtimes_A \N$ in (b) as opposed to the original treatment in \cite{LRR}. We remark that if $A$ is invertible in $M_d(\Z)$ in case (b), $X$ is a singleton in case (c), or $d =1$ in case (d), the study of KMS-states on the boundary quotient diagram essentially reduces to the study of traces on group $C^*$-algebras, compare \cite{LRR}. These cases will be excluded from our considerations as we intend to focus on proper semigroups.

In all the cases (a)-(d), the semigroup $S$ features a natural homomorphism $N\colon S \to \N^\times, s \mapsto N_s$ arising from a scaling factor $\kappa \in \R_{>0}$ and a length function $\ell \colon S \to \N$ as $N_s := \kappa^{\ell(s)}$. The map $N$ satisfies $N^{-1}(1) = S_c$ and yields a natural dynamics $\sigma$ of $\R$ on $C^*(S)$ via $\sigma_x(v_s) := N_s^{ix} v_s$ for $x \in \R$. 

Define the \emph{$\zeta$-function} for $S$ to be the formal series $\zeta_S(\beta) := \sum_{\overline{s} \in S/S_c} N_s^{-\beta}$ for $\beta \in \R$. Note that $\zeta_S$ converges for all $\beta$ above a so-called \emph{critical inverse temperature} $\beta_c$. The key to uniqueness of $\kms_\beta$-states on $C^*(S)$ for $\beta$ within a critical interval $[1,\beta_c]$ is a notion of minimality for the semigroups in (a)-(d). This data is displayed in the following table:

\[\begin{array}{|c|c|c|c|c|c|}
\hline
\text{type} & S_c & \kappa & \ell\colon S \to \N & \beta_c & \text{minimality}\\
\hline
(a) & \N & 1 & (m,p) \mapsto \log p & 2 & \bigcap_{p \in \N^\times} p\N = \{0\} \\
\hline
(b) & \Z^d & \lvert\det A\rvert & (g,n) \mapsto n & 1 &\bigcap_{n \in \N} A^n(\Z^d) = \{0\} \\
\hline
(c) & G & \lvert X \rvert & (w,g) \mapsto \ell'(w) & 1 & \forall g \in G: \lvert\{g|_w \mid w \in X^*\}\vert < \infty  \\
\hline
(d) & \N & d & \text{length from \ref{ex:BS(c,d)^+ with cd > 1}} & 1 & \bigcap_{n \in \N} \left(\frac{d}{c}\right)^n \cdot \N = \{0\} \\
\hline
\end{array}\]
Apparently, $\N \rtimes \N$ is minimal. As opposed to \cite{LRR}, we do not require that $A$ is a \emph{dilation matrix}, that is, all its eigenvalues need to be larger than one in absolute value, because this feature is only used in \cite{LRR}*{Lemma~5.7}, where the condition $\bigcap_{n \in \N} A^n(\Z^d) = \{0\}$ is then established. The point is that the latter condition is much more natural for the dynamical system $A:\N \curvearrowright \Z^d$ as it expresses minimality of the dual system.  The minimality condition for (d) is equivalent to $c \notin d\N$. 

To distinguish between elements in $C^*(S)$ and $C^*(S_c)$, let the standard generating isometries for $C^*(S_c)$ be denoted by $w_s$.

\begin{thm}\label{thm:KMS on BQD}
Suppose $S$ is a right LCM semigroup of type (a),(b),(c) or (d). Then the KMS-state structure on $C^*(S)$ with respect to the dynamics $\sigma$ given by $\sigma_x(v_s) := N_s^{ix} v_s$ is characterized by:
\begin{enumerate}[(i)]
\item There are no $\kms_\beta$-states for $\beta < 1$.
\item For $\beta \in [1,\beta_c]$, there is a $\kms_\beta$-state $\psi_\beta$ given by $\psi_\beta(v_s^{\phantom{*}}v_t^*) = \delta_{s t} N_s^{-\beta}$ for all $s,t \in S$. If $S$ is minimal, then $\psi_\beta$ is the only $\kms_\beta$-state.
\item For $\beta \in (\beta_c,\infty)$, there is an affine homeomorphism $\tau \mapsto \psi_{\beta,\tau}$ between the tracial states on $C^*(S_c)$ and the $\kms_\beta$-states given by
\[\psi_{\beta,\tau}(v_s^{\phantom{*}}v_t^*) = 
\begin{cases}
N_s^{-\beta} \ \tau(w_y^{\phantom{*}}w_x^*) &\text{if $sS \cap tS = sxS$, $sx=ty$ with $x,y \in S_c$},\\
0 &\text{otherwise.} 
\end{cases}\]
\item There is a one-to-one correspondence $\phi \mapsto \psi_\phi$ between states on $C^*(S_c)$ and ground states on $C^*(S)$ given by $\psi_{\phi}(v_s^{\phantom{*}}v_t^*) = \chi_{S_c}(s)\chi_{S_c}(t) \ \phi(w_s^{\phantom{*}}w_t^*)$. A ground state is a $\kms_\infty$-state if and only if it comes from a tracial state on $C^*(S_c)$.
\end{enumerate}
With regards to the boundary quotient diagram \eqref{eqref:BQD} the following statements hold:
\begin{enumerate}
\item[(v)] All the $\kms_\beta$-states for $\beta \in [1,\infty)$ factor through $\pi_c$.
\item[(vi)] A ground state factors through $\pi_c$ if and only if it is a $\kms_\infty$-state. In particular, every ground state on $\CQ_c(S)$ is a $\kms_\infty$-state.
\item[(vii)] If a $\kms_\beta$-state factors through $\pi_p$, then $\beta=1$. In particular, $\CQ_p(S)$ and $\CQ(S)$ have a unique $\kms_\beta$-state corresponding to $\psi_1$ if $S$ is minimal.
\item[(viii)] There are no ground states on $\CQ_p(S)$, and hence none on $\CQ(S)$.
\end{enumerate}
\end{thm}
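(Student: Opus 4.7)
The plan is to treat parts (i)--(iv) as a uniform repackaging of the case-by-case results in \cite{LR2,BaHLR,LRR,LRRW,CaHR}: for each of (a)--(d) the cited reference already establishes the parametrization of $\kms_\beta$-states in the three regimes $\beta \in [1,\beta_c]$, $\beta > \beta_c$, and $\beta = \infty$, so the work here reduces to translating each author's notation into the common language of this paper. The critical point to check in each case is that $N^{-1}(1) = S_c$, so that $\sigma$ fixes exactly the generators $v_s$ with $s \in S_c$; once this identification is in place, the explicit formulas in (ii), (iii) and (iv) match the formulas in the literature after the indicated substitutions, and the minimality conditions in the table can be verified to be the same ones used to pin down the unique $\kms_1$-state in each reference.

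For (v), fix $s \in S_c$, so that $N_s = 1$ and $\sigma_x(v_s) = v_s$ is analytic and fixed. The KMS condition then gives
\[
\psi(v_s^{\phantom{*}} v_s^*) \;=\; \psi(v_s^* \sigma_{i\beta}(v_s^{\phantom{*}})) \;=\; \psi(v_s^* v_s^{\phantom{*}}) \;=\; 1,
\]
so $\psi$ annihilates $1 - v_s^{\phantom{*}} v_s^*$ and factors through $\CQ_c(S)$. The implication ``$\kms_\infty \Rightarrow$ factors through $\pi_c$'' in (vi) then follows by passing to the limit $\beta \to \infty$. For the converse, a ground state $\psi_\phi$ factoring through $\pi_c$ corresponds via (iv) to a state $\phi$ on $C^*(S_c)$ satisfying $\phi(w_s^{\phantom{*}} w_s^*) = 1$ for every $s \in S_c$, hence $\phi$ descends to the $C^*$-algebra of the enveloping Ore group $G_c = S_c S_c^{-1}$. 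In cases (a), (b), (d) this group is abelian, so every such $\phi$ is automatically tracial, and (iv) identifies $\psi_\phi$ as a $\kms_\infty$-state; case (c) requires a finer descent argument drawing on the structure of the self-similar action as in \cite{LRRW}.

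For (vii), suppose $\psi$ is a $\kms_\beta$-state on $C^*(S)$ factoring through $\pi_p$. Applying $\psi$ to the Cuntz relation $\sum_{f \in F} v_f^{\phantom{*}} v_f^* = 1$ for a proper accurate foundation set $F$ yields
\[
\sum_{f \in F} N_f^{-\beta} \;=\; 1.
\]
Inspection of Section~\ref{sec:examples} shows that in each of (a)--(d) one can choose $F$ whose elements share a common value $N_f = |F|$ (namely $|F| = p$, $|\det A|$, $|X|$, or $d$), so the equation collapses to $|F|^{1-\beta} = 1$ with $|F| \geq 2$, forcing $\beta = 1$. Uniqueness of the $\kms_1$-state under minimality then follows from (ii) and transfers to $\CQ_p(S)$ and $\CQ(S)$. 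For (viii), (iv) gives $\psi_\phi(v_f^{\phantom{*}} v_f^*) = \chi_{S_c}(f)\, \phi(w_f^{\phantom{*}} w_f^*) = 0$ whenever $f \in S_{ci}$, so $\sum_{f \in F} \psi_\phi(v_f^{\phantom{*}} v_f^*) = 0$ for every proper accurate $F$, in direct conflict with the Cuntz relation holding in $\CQ_p(S)$; the dynamics descends to $\CQ_p(S)$ because $\sigma_x$ fixes each $v_f^{\phantom{*}} v_f^*$, so this conflict is a genuine obstruction.

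The main obstacle will be the bookkeeping in (i)--(iv), where the four source papers use distinctly different conventions and where the trace parametrization of (iii) must be carefully matched to the formula in each reference. The subtlest conceptual point is the converse direction of (vi), which is immediate when $S_c$ is abelian but demands a finer descent argument in case (c); I would expect that verification to absorb most of the remaining technical effort.
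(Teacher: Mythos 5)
Your overall strategy is essentially the paper's: parts (i)--(vii) are obtained by matching each type (a)--(d), via the descriptions of $\CQ_c(S)$ and $\CQ_p(S)$ worked out in Section~\ref{sec:examples}, against the corresponding case study in \cites{LR2,BaHLR,LRR,LRRW,CaHR}, and (viii) is proved exactly by your argument (lift a ground state along $\pi_p$, apply (iv), and evaluate on $\sum_{f\in F}e_{fS}=1$ for a proper accurate foundation set). Your direct KMS-condition derivations of (v) and of the ``only $\beta=1$'' half of (vii) are fine and, if anything, more self-contained than the paper's citations; note also that for types (b) and (c) one has $S_c=S^*$, so $\pi_c=\id$, which trivializes (v) and lets (iv) and (vi) collapse into a single cited statement --- no separate ``descent argument'' for the self-similar case is needed beyond \cite{LRRW}*{Proposition~5.3}.

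The genuine gap is in (iii) for type (d). The theorem asserts the affine homeomorphism $\tau\mapsto\psi_{\beta,\tau}$ for $\beta>\beta_c$ \emph{without} any minimality hypothesis, whereas \cite{CaHR}*{Theorem~6.1} proves injectivity of this parametrization only under minimality ($c\notin d\N$). So for Baumslag--Solitar monoids the ``translation of notation'' you plan does not suffice: the paper has to strengthen the cited result, by rewriting \cite{CaHR}*{(6.1)} as the recursion $\zeta_S(\beta)\psi_{\beta,\tau}(v_{b^n})=\tau(v_{b^n})+\chi_{d\N}(n)\,d^{1-\beta}\psi_{\beta,\tau}(v_{b^{n(c/d)}})$ (and its adjoint version), from which $\tau$ is recovered on the generators of $C^*(S_c)\cong C^*(\N)$ from $\psi_{\beta,\tau}$ alone, giving injectivity with no minimality assumption. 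Your generic caveat about ``carefully matching the trace parametrization'' does not identify, let alone supply, this extra argument. A smaller soft spot: in (vii) the ``in particular'' clause also requires \emph{existence}, i.e.\ that $\psi_1$ actually factors through $\pi_p$ (and through to $\CQ(S)$); this follows either from the references or from your computation $\sum_{f\in F}N_f^{-1}=1$ combined with the reduction in Section~\ref{sec:examples} of the defining relations of $\CQ_p(S)$ to the special (elementary) foundation sets, but it should be stated rather than left implicit.
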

\begin{proof}
Using the descriptions obtained in Example~\ref{ex:NxN^x and subdynamics}, Example~\ref{ex:ADS}, Example~\ref{ex:self-similar actions}, and Example~\ref{ex:BS(c,d)^+ with cd > 1} for (a)-(d), we embark on a reference chase, and leave it as an exercise to identify the semigroup $C^*$-algebra $C^*(S)$ with the $C^*$-algebra of Toeplitz type considered in the respective reference. We will prove part (viii) simultaneously for all cases at the end.

For $S=\N \rtimes \N^\times$, the claims follow from \cite{LR2}*{Theorem~7.1} and \cite{BaHLR}*{Section~4}. For $S= \Z^d \rtimes_A \N$, we note that $\pi_c=\id$ and hence $\pi=\pi_p$, so that \cite{LRR}*{Theorem~1.1} yields (i)-(vii).

Next, let $S=X^* \bowtie G$. Part (i) is \cite{LRRW}*{Proposition~4.1(1)}, and (iii) is \cite{LRRW}*{Theorem~6.1}. As $C^*(S) = \CQ_c(S)$, claim (v) is trivial. Also, (iv) and (vi) merge to a single statement that corresponds to \cite{LRRW}*{Proposition~5.3}: Ground states on $C^*(S)$ are $\kms_\infty$-states, and they are given by tracial states on $C^*(S_c) \cong C^*(G)$. The claims (ii) and (vii) are proven in \cite{LRRW}*{Proposition~7.1 and Theorem~7.3}.

Now let $S= BS(c,d)^+$. Then \cite{CaHR}*{Corollary~5.3(a) and (b)} gives (i) and (v). The combination of \cite{CaHR}*{Corollary~5.3(c)} with \cite{CaHR}*{Proposition~7.1} yields (ii) and (vii). Statement (iv) corresponds to \cite{CaHR}*{Theorem~8.1}, and (vi) is an immediate consequence of this. Claim (iii) is essentially provided by \cite{CaHR}*{Theorem~6.1} except for the fact that the authors assume minimality of $S$ in order to show injectivity of the parametrization $\tau \mapsto \psi_{\beta,\tau}$. Thus we need to strengthen this result slightly. To this end, we observe that the formula \cite{CaHR}*{(6.1)} becomes
\[\begin{array}{lcl}
 \zeta_S(\beta)\psi_{\beta,\tau}(v_{b^n}) &=&\tau(v_{b^n}) + \sum\limits_{\substack{k \geq 1: \ n \ \in \ d(d/c)^j\N \\ \text{for } 0 \leq j \leq k-1}} d^{k(1-\beta)} \tau(v_{b^{n(c/d)^k}}) \vspace*{2mm}\\
&=& \tau(v_{b^n}) + \chi_{d\N}(n) \ d^{1-\beta}\sum\limits_{\substack{k \geq 0: \ (c/d)n \ \in \ d(d/c)^j\N \\ \text{for } 0 \leq j \leq k-1}} d^{k(1-\beta)} \tau(v_{b^{n(c/d)^{k+1}}}) \vspace*{2mm}\\
&=& \tau(v_{b^n}) + \chi_{d\N}(n) \ d^{1-\beta}\psi_{\beta,\tau}(v_{b^{n(c/d)}})
\end{array}\]
for every $n \in \N$ within our notation. The analogous formula holds for $v_{b^n}^*$. Hence, if $\tau$ and $\rho$ are tracial states on $C^*(\N)$ with $\psi_{\beta,\tau}=\psi_{\beta,\rho}$, then $\tau=\rho$, without assuming that $S$ is minimal, as in \cites{LRR,LRRW}. This completes (iii).

In all cases, (viii) is a consequence of (iv) and the existence of accurate proper foundation sets, as indicated in \cite{BaHLR}*{End of Section~4}: Since $\pi_p$ is a $*$-homomorphism, every ground state $\phi$ on $\CQ_p(S)$ lifts to a ground state on $C^*(S)$. We then conclude by (iv) that $\phi$ vanishes on all projections $\pi_p(e_{sS})$ with $s \in S \setminus S_c$. So if there is an accurate proper foundation set $F$, then $1 = \phi(1) = \phi(\sum_{f \in F} \pi_p(e_{fS})) = 0$. Thus there are no ground states on $\CQ_p(S)$ and $\CQ(S)$. 
\end{proof}

Assuming $S_c$ to be right cancellative and hence right Ore, let $G_c$ denote the group $S_c^{\phantom{1}}S_c^{-1}$. As the traces on $C^*(S_c)$ correspond to traces on $C^*(G_c)$, it may seem more natural to use the group $C^*$-algebra in Theorem~\ref{thm:KMS on BQD}. However, we emphasize the core subsemigroup $S_c$ here because we like to think of the $\kms_\beta$-states as arising from the $*$-homomorphism $\varphi\colon C^*(S_c) \to C^*(S)$ induced by the inclusion $S_c \subset S$.

The parallels between the results in Theorem~\ref{thm:KMS on BQD} for the different types (a)-(d) extend beyond their mere statements, for which the boundary quotient diagram provides a unifying framework. Indeed, there are striking analogies in the method of proof. It thus seems natural to ask whether there is a unified treatment for KMS-state structures on the proposed boundary quotient diagram \eqref{eqref:BQD} for general right LCM semigroups.

In an attempt to promote this perspective, the author is currently investigating how one may characterize the KMS-state structure for the boundary quotient diagram associated to certain algebraic dynamical systems $(\gpt)$, without using the model $S = \gxp$ explicitly \cite{ABLS}.

\section{A first look at the K-theory}\label{sec:k-theory}
In Section~\ref{sec:KMS} we learned that the left column of the boundary quotient diagram \eqref{eqref:BQD} typically has a significantly richer supply of KMS-states than the right column, see Theorem~\ref{thm:KMS on BQD}. With regards to K-theory however, the situation is almost the opposite: 

\begin{thm}[\cite{BLS2}*{Theorem~5.3}]\label{thm:BLS2 result}
Suppose $S$ is a left Ore right LCM semigroup. If the group $S^{-1}S$ satisfies the Baum-Connes conjecture with coefficients in commutative $C^*$-algebras and the left regular representation $\lambda:C^*(S) \to C_r^*(S)$ is an isomorphism, then $K_*(C^*(S)) \cong K_*(C^*(S^*))$.
\end{thm}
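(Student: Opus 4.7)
The plan is to combine a crossed product description of $C^*(S)$ with the Baum--Connes machinery, and then to exploit the simple structure of constructible right ideals in a right LCM semigroup.

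First I would invoke Laca's dilation theorem for left Ore semigroups, in the form adapted by X.~Li to semigroup $C^*$-algebras: there is an isomorphism of $C_r^*(S)$ onto a full corner $p(D \rtimes_r G)p$, where $G = S^{-1}S$, $D \subset C_r^*(S)$ is the commutative diagonal generated by the range projections $e_{sS}$, $G$ acts on $D$ by extending the partial action of $S$ on principal right ideals, and $p$ is the projection corresponding to $S \in \CJ(S)$. Since $\lambda\colon C^*(S) \to C_r^*(S)$ is an isomorphism by hypothesis, the same description applies to $C^*(S)$, and Morita invariance of $K$-theory gives
\[ K_*(C^*(S)) \cong K_*(D \rtimes_r G). \]

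Next I would apply the Baum--Connes assumption. Because $D$ is commutative, the conjecture (with commutative coefficients) yields
\[ K_*(D \rtimes_r G) \cong K_*^G(\hat D), \]
where $\hat D$ is the Gelfand spectrum and the right hand side is equivariant topological $K$-homology. The remaining task is thus to read off $K_*^G(\hat D)$. The right LCM hypothesis enters crucially here: because $\CJ(S) = \{\emptyset\} \cup \{sS \mid s \in S\}$ consists only of principal right ideals, filters on $\CJ(S)\setminus\{\emptyset\}$ are very rigid. Combined with the left Ore condition (which forces any two principal ideals to meet), one can identify the ``non-degenerate'' part of $\hat D$ with a single $G$-orbit $G/S^*$: the stabilizer of the character corresponding to $S$ itself is precisely $\{g \in G \mid gS = S\} = S^*$. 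A Green--Julg--type identification then gives $K_*^G(G/S^*) \cong K_*(C^*(S^*))$, which is available because $S^*$ inherits the Baum--Connes property from $G$. Collating these isomorphisms yields the desired conclusion.

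The main obstacle is the orbit analysis of $\hat D$: one must justify the claim that only the orbit through the character of the unit ideal contributes nontrivially to $K_*^G(\hat D)$. The natural approach is to set up a $G$-invariant stratification $\hat D = \Omega_0 \sqcup \Omega_1$ with $\Omega_0 \cong G/S^*$ open and $\Omega_1$ the lower-dimensional locus of filters not supported on all principal right ideals, feed this into the six-term exact sequence in equivariant $K$-theory, and verify that the contribution of $\Omega_1$ vanishes (or at least cancels in a controlled way). Making this precise, in particular pinning down $\Omega_1$ explicitly, handling filters which are not principal, and ensuring the boundary map in the six-term sequence behaves as expected, is the technical heart of the argument; everything else is bookkeeping once the right geometric model for $\hat D$ has been fixed.
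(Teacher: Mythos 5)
Note first that the paper does not prove this statement at all: it is quoted verbatim from \cite{BLS2}*{Theorem~5.3}, whose proof runs through the Cuntz--Echterhoff--Li machinery for Ore semigroups (dilate $C^*_r(S)\cong$ full corner in $D_\infty\rtimes_r G$ with $G=S^{-1}S$, then apply their general theorem computing $K_*$ of crossed products by actions on totally disconnected spaces with a $G$-invariant independent regular basis of compact open sets, the basis here being the translates $gsS$; since $S$ is right LCM these form a single $G$-orbit with stabilizer $\{g\in G\mid gS=S\}=S^*$, whence $K_*(C^*(S))\cong K_*(C^*(S^*))$). Your skeleton --- dilation, Morita invariance of the full corner, Baum--Connes, one orbit with stabilizer $S^*$ --- is exactly this strategy, and you correctly identify why right LCM enters (only principal constructible ideals) and why the answer is $K_*(C^*(S^*))$.

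However, the step you defer as ``the technical heart'' is not a technicality you can expect to fix along the lines you sketch, and two of your reductions are not correct as stated. First, Baum--Connes with coefficients does not give $K_*(D\rtimes_r G)\cong K_*^G(\hat D)$; it identifies $K_*(D\rtimes_r G)$ with the topological side $K_*^{\mathrm{top}}(G;D)$ built from the classifying space for proper actions, and computing that is precisely the hard part. Second, the proposed stratification $\hat D=\Omega_0\sqcup\Omega_1$ with a vanishing (or harmlessly cancelling) contribution of $\Omega_1$ fails already in the simplest example $S=\N$, $G=\Z$, $S^*=\{1\}$: there $\hat D=\Z\sqcup\{\infty\}$ with $\infty$ a $G$-fixed point, the resulting extension of crossed products is the Toeplitz extension $0\to\CK\to D\rtimes\Z\to C^*(\Z)\to 0$, and the boundary stratum contributes $K_*(C^*(\Z))\neq 0$; the correct answer arises from a nontrivial index/boundary map, not from vanishing. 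In general the ``degenerate'' part of $\hat D$ is a large boundary, and controlling this cancellation for arbitrary $G$ and $S$ is exactly the content of the Cuntz--Echterhoff--Li theorem, whose proof does not run a six-term sequence on $\hat D$ but transfers the problem to the left-hand side of the assembly map (where restriction to stabilizers and an induction over finite subfamilies of the independent family $\{gsS\}$ are available); the independence of the constructible ideals, which you never address, is an essential hypothesis there. Finally, two smaller points: Green--Julg concerns compact groups --- what you want is Green's imprimitivity theorem $C_0(G/S^*)\rtimes_r G\sim_{\mathrm{Morita}}C^*_r(S^*)$ --- and this produces $K_*(C^*_r(S^*))$, so passing to the full $C^*(S^*)$ in the statement still requires a word (as in the cited reference). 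So: right roadmap, but the argument as proposed has a genuine gap where the main theorem being invoked actually lives.
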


In particular, Theorem~\ref{thm:BLS2 result} applies if $S^{-1}S$ is amenable. For instance,  $S= \N \rtimes P$ as in Example~\ref{ex:NxN^x and subdynamics} satisfies all these conditions. Moreover, $\CQ_c(\N \rtimes P) \cong C^*(\Z \rtimes P)$, and $\Z \rtimes P$ also satisfies the prerequisites of Theorem~\ref{thm:BLS2 result}. Thus we get $K_*(C^*(S)) \cong K_*(\C)$ and $K_*(\CQ_c(S)) \cong K_*(C^*(\Z))$ in these cases.

On the other hand, the computation of the K-theory for $\CQ(S)$  can be a challenging task, already for the case where the semigroup arises from a singly generated dynamical system, see \cite{CuntzVershik}. However, this is perhaps the most interesting case as $\CQ(S)$ happens to a unital UCT Kirchberg algebra for many right LCM semigroups under mild assumptions, see \cites{Star,BS1}. 

For semigroups related to dynamical systems with higher complexity, very few is known. Therefore we will start with a very basic example, namely subdynamics of $\N \rtimes \N^\times$, i.e.~$S= \N \rtimes P$, where $P$ is the free abelian submonoid of $\N^\times$ generated by a family $\CP$ of relatively prime numbers. For such semigroups, nontrivial partial results emerged in the recent past \cites{LN2,BOS1}. These results lead to intriguing questions and relate to conjectures about $C^*$-algebras of $k$-graphs, see \cite{BOS1}*{Conjecture~5.11}.

\begin{remark}\label{rem:K-theory NxN^times}
In \cite{LN2}*{6.3}, Li and Norling determined $K_*(\CQ_p(S))$ for $\lvert\CP\rvert \leq 2$ as well as $K_1(\CQ_p(S))$ for $\lvert\CP\rvert=3$. In joint work with Barlak and Omland \cite{BOS1}, we obtained similar formulas for $K_*(\CQ(S))$ in the case of $\lvert\CP\rvert \leq 2$ or $g_\CP = 1$, where $g_\CP$ is the greatest common divisor of $\CP-1 \subset \N^\times$, i.e.~ $g_\CP := \gcd(\{p-1 \mid p \in \CP\})$. In addition, we proved a number of structural results about $K_*(\CQ(S))$, which we deem relevant here:
\begin{enumerate}[a)]
\item $K_i(\CQ(S)) \cong \Z^{2^{\lvert\CP\rvert-1}} \oplus K_i(\CA(S))$ for $i=0,1$, where  $\CA(S)$ is the subalgebra of $\CQ(S)$ generated by $\{\pi(v_{(n,p)}) \mid p \in \CP, 0 \leq n \leq p-1\}$.
\item $\CA(S) \cong M_{d^\infty}(\C) \rtimes P$ with $d= \prod_{p \in \CP} p$, and hence $\CA(S)$ is a unital UCT Kirchberg algebra. In particular, $\CA(S)$ also embeds into $\CQ_p(S)$ due to simplicity. Thus there is a commutative diagram 
\[\xymatrix@=17mm{
\CQ_p(S) \ar^(.3){\cong}[r]& \varinjlim M_{d^n}(C^*(\N)) \rtimes P& \varinjlim M_{d^n}(C^*(\N)) \ar@{_(->}[l] \\
\CA(S) \ar^(.3){\cong}[r] \ar@{^(->}[u]\ar@{_(->}[d]& M_{d^\infty}(\C) \rtimes P \ar@{^(->}[u]\ar@{_(->}[d] & M_{d^\infty}(\C) \ar@{_(->}[l] \ar@{^(->}[u] \ar@{_(->}[d] \\
\CQ(S) \ar^(.3){\cong}[r]& \varinjlim M_{d^n}(C^*(\Z)) \rtimes P& \varinjlim M_{d^n}(C^*(\Z)) \ar@{_(->}[l]
}\]
given by the natural inclusions of the $P$-invariant subalgebra $M_{d^\infty}(\C)$ into the Bunce-Deddens algebra of type $d^\infty$ and its extension $\varinjlim M_{d^n}(C^*(\N))$. 
\item $S_{ci}^1 = \{(n,p) \in S \mid 0 \leq n \leq p-1\}$ is a right LCM semigroup, $S= S_{ci}^1 \bowtie S_c$, and the inclusion $S_{ci}^1 \subset S$ induces an isomorphism $\CQ(S_{ci}^1) \to \CA(S)$.
\item $\CA(S)$ is isomorphic to $\bigotimes_{p \in \CP} \CO_p$ for $\lvert\CP\rvert \leq 2$. For $\lvert\CP\rvert \geq 3$, the order of every element in $K_*(\CA(S))$ is a divisor of $g_\CP$. This leads to the conjecture that $\CA(S)$ is isomorphic to $\bigotimes_{p \in \CP} \CO_p$ for all families $\CP$, see \cite{BOS1}*{Conjecture~6.5}.
\item By b), $\CA(S)$ embeds canonically into $\CQ_p(S)$ and this embedding yields an isomorphism in $K$-theory, at least for $\lvert\CP\rvert \leq 2$, see \cites{LN2,BOS1}.
\end{enumerate}
\end{remark} 

\noindent The relevance of $\CA(S)$ for the K-theory of $\CQ(S)$ and $\CQ_p(S)$ is apparent.  This shall serve as our motivation for investigating the potentials of two different options for defining $\CA(S)$ in a broader context:\vspace*{2mm} 

\noindent Let $S = \gxp$ for an algebraic dynamical system $(\gpt)$ and denote the diagonal subalgebra in $\CQ(S)$ by $\CD$. Using the approach from \cite{Sta1}, one can show that $\CQ(S) \cong \CD \rtimes S \cong (\CD \rtimes G)\rtimes P$. If, moreover, $P$ is directed with respect to $p \geq q :\Leftrightarrow p \in qP$ and $G/\theta_p(G)$ is finite for all $p \in P$, then we get a generalized Bunce-Deddens algebra $\CD \rtimes G = \lim\limits_{\substack{\longrightarrow \\ p \in P}} M_p(C^*(G))$, and its canonical AF-subalgebra $\lim\limits_{\substack{\longrightarrow \\ p \in P}} M_p(\C)$ is invariant under the $P$-action. In this case, define $\CA(S)$ to be the resulting semigroup crossed, as this is a natural analogue for the torsion subalgebra $\CA(S)$ from Remark~\ref{rem:K-theory NxN^times}.

If $P$ is free abelian, a minor modification of \cite{BOS1}*{Proposition~4.6} then shows that $K_*(\CA(S))$ is again a torsion group, which is finite if $P$ is finitely generated. 

Next, suppose that $\CQ(S)$ to be a unital UCT Kirchberg algebra (or just simple). Then the corresponding property will pass to $\CA(S)$, basically by using the arguments from \cite{BOS1}*{Proposition~5.1 and Corollary~5.2}. In particular, $\CA(S)$ can be identified with a subalgebra of $\CQ(S)$ in this case.\vspace*{2mm}

\noindent An obvious alternative is to consider $\CQ(S_{ci}^1)$ for an arbitrary right LCM semigroup $S$. This is expected to be of greater interest if $S^* = \{1\}$ and $S=S_{ci}^1S_c$. Recall that this allows us to write $S= S_{ci}^1 \bowtie S_c$, see Remark~\ref{rem:S=S_ciS_c and S* trivial -> ZS-product}. As an example, let us look at the outcome for Baumslag-Solitar monoids $S=BS(c,d)^+$ with $c,d \geq 1$: Thanks to the efforts of Spielberg, the $K$-theory for $\CQ(S)$ is already known:

\begin{thm}[\cite{Spi1}*{Theorem~4.8}]\label{thm:BS k-theory} 
For $c,d \geq 1$ and $S=BS(c,d)^+$, the $K$-theory of $\CQ(S)$ is given by
\[\begin{array}{lcccc}
K_0(\CQ(S)) &\cong& \Z/(d-1)\Z &\oplus& \delta_{1 c} \Z, \\
K_1(\CQ(S)) &\cong& \delta_{1 d}\Z &\oplus&  \Z/(c-1)\Z.
\end{array}\] 
\end{thm}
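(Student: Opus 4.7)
I would compute $K_*(\CQ(S))$ by presenting $\CQ(S)$ as a $\Z$-crossed product $B \rtimes_\beta \Z$ of a suitable subalgebra $B$ and applying the Pimsner-Voiculescu exact sequence. Using the description from Example~\ref{ex:BS(c,d)^+ with cd > 1}, $\CQ(S)$ is generated by a unitary $u=\pi(v_b)$ and a Cuntz family $s_0,\dots,s_{d-1}$ of isometries (with $s_k=u^k s_0$) satisfying the lifted Baumslag-Solitar relation $u^d s_0 = s_0 u^c$. The natural gauge action $\alpha_z(u)=u$, $\alpha_z(s_k)=z\,s_k$ gives $\CQ(S)$ a $\Z$-grading, so $\CQ(S) \cong B \rtimes_\beta \Z$ where $B=\CQ(S)^\alpha$ is the gauge-fixed subalgebra and $\beta$ is the dual action.

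First I would describe $B$ as an inductive limit. Setting $B_n := \clsp\{s_\mu\, u^k\, s_\nu^* : |\mu|=|\nu|=n,\ k\in\Z\}$, each $B_n$ is isomorphic to a matrix algebra of size $d^n$ over $C(\T)$. The inclusions $B_n\hookrightarrow B_{n+1}$ are determined by expanding $1=\sum_k s_k s_k^*$ together with the iterated Baumslag-Solitar relation $s_0^{*n}\,u^{d^n}\,s_0^n = u^{c^n}$, which describes how the winding-number class transforms between levels. Hence $B$ is an AT-algebra, and $K_*(B)$ can be computed as the corresponding inductive limit of $\Z$'s.

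Second I would apply the Pimsner-Voiculescu six-term exact sequence
\[
K_1(B) \xrightarrow{\id - \beta_*} K_1(B) \to K_0(\CQ(S)) \to K_0(B) \xrightarrow{\id - \beta_*} K_0(B),
\]
together with its cyclic companion. The dual action $\beta_*$ acts as multiplication by $d$ on the rank component of $K_0(B)$ (reflecting the size of the Cuntz family) and as multiplication by $c$ on the winding component of $K_1(B)$ (reflecting the BS relation). Taking cokernels produces the torsion summands $\Z/(d-1)\Z$ and $\Z/(c-1)\Z$, whereas the kernels contribute the free summands $\delta_{1c}\Z$ in $K_0$ and $\delta_{1d}\Z$ in $K_1$, surviving precisely when the respective multiplication degenerates to the identity.

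The main obstacle is the second step: tracking how the iterated BS relation twists the inductive-limit maps on $K_*(B_n)$ and correctly computing the induced action $\beta_*$ on the limit. The interplay between the $d^n$-scaling from the Cuntz family and the $c^n$-scaling from the BS relation is what produces the combined torsion-plus-conditionally-free structure, and one must verify that the free summands appear under exactly the Kronecker conditions $\delta_{1c}$ and $\delta_{1d}$. An alternative route, pursued in \cite{Spi1}, is to model $\CQ(S)$ via a groupoid built from the graph-of-groups decomposition of $BS(c,d)$ and compute $K_*$ through a Mayer-Vietoris type sequence over vertex and edge stabilizers; this bypasses the AT-limit bookkeeping at the cost of developing the groupoid machinery in detail.
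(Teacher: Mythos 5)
The paper does not actually prove this statement: it is imported verbatim from \cite{Spi1}*{Theorem~4.8}, where Spielberg computes $K_*(\CQ(S))$ through his category-of-paths groupoid model for $BS(c,d)$ (the route you mention only in your last sentence). So your sketch is necessarily a different argument, and in outline it is a reasonable and standard one: it is essentially the gauge-action computation one would do for a Katsura-type algebra, since $\CQ(S)$ for $c,d\geq 2$ is the algebra generated by a unitary $u=\pi(v_b)$ and a Cuntz family $s_k=u^k s_0$ subject to $u^d s_0=s_0u^c$. Your identification of the core as $B=\varinjlim B_n$ with $B_n\cong M_{d^n}(C(\T))$, and of the connecting maps as having ``rank degree'' $d$ and ``winding degree'' $c$ (indeed $u\mapsto \sum_{j=0}^{d-2}s_{j+1}s_j^*+s_0u^cs_{d-1}^*$, whose determinant winds $c$ times), is correct, and the final arithmetic (cokernels $\Z/(d-1)\Z$, $\Z/(c-1)\Z$; kernels $\delta_{1c}\Z$, $\delta_{1d}\Z$; split extensions since the quotients are free) matches the theorem.

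The genuine gap is the pivot of the argument: the claim $\CQ(S)\cong B\rtimes_\beta\Z$ with $B=\CQ(S)^\alpha$ is false, not just imprecise. For $c,d\geq 2$ with $d\nmid c$ the algebra $\CQ(S)$ is a unital Kirchberg algebra (\cite{Spi1}*{Corollary~4.10}, quoted at the end of Section~5), hence traceless; but $B$ is a unital AT-algebra, every automorphism of $B$ fixes some tracial state (Markov--Kakutani on the trace simplex), so any crossed product $B\rtimes\Z$ has a tracial state. This is the same reason $\CO_d\not\cong M_{d^\infty}\rtimes\Z$. The standard repairs of your idea are: (i) stabilize and use Takai duality, $\CQ(S)\otimes\CK\cong(\CQ(S)\rtimes_\alpha\T)\rtimes_{\hat\alpha}\Z$, after verifying $\CQ(S)\rtimes_\alpha\T\cong B\otimes\CK$ (fullness of the spectral subspaces), and then apply Pimsner--Voiculescu with the dual automorphism acting by multiplication by $d$ on $K_0(B)\cong\Z[1/d]$ and by $c$ on $K_1(B)\cong\Z[1/c]$; or (ii), more efficiently, realize $\CQ(S)$ as the Cuntz--Pimsner algebra of the rank-$d$ correspondence over $C^*(u)\cong C(\T)$ encoded by $u^ds_0=s_0u^c$ (Katsura's topological graph on $\T$ with maps $z\mapsto z^d$ and $z\mapsto z^c$) and apply Pimsner's six-term sequence directly to $K_*(C(\T))\cong\Z\oplus\Z$, where the induced map is multiplication by $d$ on $K_0$ and by $c$ on $K_1$; this bypasses the inductive-limit bookkeeping you flag as the main obstacle. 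Two further points need attention in either version: you must justify $C^*(u)\cong C(\T)$ (full spectrum of $\pi(v_b)$), e.g.\ via a boundary representation or the expectation onto the diagonal; and the degenerate cases $c=1$ or $d=1$ (where there is no proper Cuntz family, e.g.\ $S=\N^2$ gives $\CQ(S)\cong C(\T^2)$) fall outside your setup and need separate, easy, treatment even though the stated formulas cover them.
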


Note that $c=d=1$ gives $S = \N^2$. Let us focus on the case where $c,d \geq 2$. Recall from Example~\ref{ex:BS(c,d)^+ with cd > 1} that we have $S_{ci}^1 \cong \IF_d^+$. Thus $\CQ(S_{ci}^1) \cong \CO_d$ is simple and embeds into $\CQ(S)$. More importantly, the formulas from Theorem~\ref{thm:BS k-theory} suggest that this map is injective in $K$-theory, which is true by \cite{Spi1}, where it is shown that $[1]_0$ is the generator of $\Z/(d-1)\Z \subset K_0(\CQ(S))$. But this does not explain the parts of $K_*(\CQ(S))$ related to $c$. In that respect, we remark that the opposite semigroup $S^{\text{opp}}$ of $S$ coincides with $BS(d,c)^+$, and hence $(S^{\text{opp}})_{ci}^1 \cong \IF_c^+$. Another way of looking at $S^{\text{opp}}$ is to consider left ideals in $S$, and use the normal form $s=b^kw_n\cdots w_1$ with $w_i \in \{ab^j \mid 0 \leq j \leq c-1\}$, which has analogous properties to the normal form used in Example~\ref{ex:BS(c,d)^+ with cd > 1}. $S^{\text{opp}}$ plays the role of $S$ when considering right representations in place of left representations (bearing in mind that $C^*(S)$ is a universal model for the left regular representation of $S$ on $\ell^2(S)$). As $\CQ((S^{\text{opp}})_{ci}^1) \cong \CO_c$, the natural question arising from Theorem~\ref{thm:BS k-theory} is:

\begin{question}
Suppose $c,d >1$. Is there a $*$-homomorphism from the suspension of $\CO_c$ to $\CQ(BS(c,d)^+)$ that is injective in $K$-theory?
\end{question}

Let us close by pointing out that $\CQ(S)$ is a Kirchberg algebra if and only if $d$ does not divide $c$, see \cite{Spi1}*{Corollary~4.10}. This minimality condition was identified as the prerequisite for uniqueness of the $\kms_1$-state on $C^*(S)$ in Theorem~\ref{thm:KMS on BQD}~(ii).

\section*{References}
\begin{biblist}
\bibselect{bib}
\end{biblist}

\end{document}